\theoremstyle{definition}
\newtheorem{example}{Example}[section]
\newtheorem{lemma}[equation]{Lemma}
\newtheorem{theorem}[equation]{Theorem}
\newtheorem*{theorem*}{Theorem A}
\newtheorem*{corollary*}{Corollary}
\theoremstyle{definition}
\newtheorem{definition}[equation]{Definition}
\newtheorem{remark}[equation]{Remark}
\numberwithin{equation}{section}
\newcommand{\xiv}{\boldsymbol{\xi}}
\newcommand{\etav}{\boldsymbol{\eta}}
\newcommand{\N}{\mathbb{N}}
\newcommand{\R}{\mathbb{R}}
\begin{document}
\title{Runge-Kutta  and Networks}
\author{Lee DeVille, Eugene Lerman and James Schmidt}
\address{Department of Mathematics, University of Illinois, Urbana, IL 61801}

\begin{abstract}
We categorify the RK family of numerical integration methods (explicit
and implicit).  Namely
we prove that if a pair of ODEs are related by an affine  map then the
corresponding discrete time dynamical systems are also related by the
map.  We show that in practice this works well when the pairs of
related ODEs come from the coupled cell networks formalism and, more
generally, from fibrations of networks of manifolds.
\end{abstract}
\maketitle

\epigraph{In theory there is no difference between theory and
  practice. In practice, there is.}{Attributed to various people}
\section{Introduction}
The goal of the paper is to study the compatibility of the RK family
of 
numerical integration methods with maps of dynamical systems.
Our initial motivation was to understand why the fourth order explicit
Runge-Kutta method (RK4) preserves polydiagonals in coupled cell
networks even when these invariant subsystems (the polydiagonals) are
exponentially unstable.

Coupled cell networks, which is an interesting class of continuous
time dynamical systems, were introduced by Golubitsky, Stewart, Pivato
and T\"or\"ok \cite{Golubitsky.Stewart.Torok.05,
  Stewart.Golubitsky.Pivato.03}.  They have been intensely studied by
many mathematicians for a number of years.  The polydiagonals of
coupled cell networks are invariant subsystems that ultimately arise
from the combinatorics of the networks in question.  The framework of
coupled cell networks has been generalized by the two of us \cite{DL1,
  DL2}.  We showed that the combinatorics of the networks leads not
only to invariant subsystems but more generally to maps between
dynamical systems.  Recall that maps between continuous time dynamical
systems are maps between their phase spaces that send the trajectories
of the first system to the trajectories of the second (see
Definition~\ref{def:related} and subsequent remarks).  Again we
could see in examples that maps of continuous time dynamical systems
were compatible with the discretizations provided by RK4.

The formalism of \cite{DL2} has been generalized further to networks
of open systems, see \cite{L}.    In particular, while the formalism
\cite{DL2}  produces maps of dynamical systems that are essentially
linear the formalism of \cite{L} can produce pairs of dynamical
systems related by truly nonlinear maps.
The main theoretical result of the paper can be now formulated as
follows. See Theorems~\ref{thm:1} and~\ref{thm:2} below for more precise
formulations.\\[4pt]

\noindent{\bf Theorem. }\quad Let $X:\R^n\to \R^n$ and $Y:\R^m\to \R^m$
be a pair of maps defining the ODEs $\dot{x} = X(x)$ and $\dot{y} =
Y(y)$.  Let $L:\R^n\to \R^m$ be a linear map and $p\in \R^m$ a
vector.  Suppose
\[
Y(Lx +p) = L(X(x)) 
  \]
for all $x\in \R^n$.   Let $D_X:\R^n\to \R^n$, $D_Y:\R^m \to \R^m$
denote a pair of discrete time dynamical system produced by a
Runge-Kutta method (explicit or implicit).   Then
\[
D_yY\, (Lx +p ) = L(D_X(x)) + p
  \]
  for all $x\in \R^n$.\\

Equivalently, if  a map of
dynamical systems $f: (X, \R^n) \to (Y, \R^m)$ is affine (i.e., $f(x)
= Lx +p$ for some linear map $L:\R^n\to \R^m$) then $f$ sends a
trajectory $\{x_n = (D_X)^n (x_0)\}_{n=1}^\infty$ of the discretized system $D_X$ to
the trajectory $\{y_n = (D_Y)^n (f(x_0))\}_{n=1}^\infty$ of the
discretized system $D_Y$.

In the case of coupled cell networks $\R^n = \R^{n_1}\times
\R^{n_2}\times \cdots \times \R^{n_r} $, $\R^m =
(\R^{n_1})^{k_1}\times (\R^{n_2})^{k_2}\times \cdots \times
(\R^{n_r})^{k_r}$ for some $n_1, \ldots, n_r$, $k_1, \ldots, k_r$ and
the map
\[
f:\R^{n_1}\times \R^{n_2}\times\cdots \times \R^{n_r} \to (\R^{n_1})^{k_1}\times (\R^{n_2})^{k_2}\times \cdots \times (\R^{n_r})^{k_r}
  \]
  is of the form
 \[ 
f(x_1,\ldots x_r) = (\underbrace{(x_1,\ldots, x_1)}_{k_1 }, \ldots, \underbrace{(x_r,\ldots, x_r)}_{k_r}).
\]
Thus our theorem proves that polydiagonals in coupled cell networks
are preserved by any numerical method in the RK family.  We give an
example to show that this works in practice even if the polydiagonal
(that is, the invariant subsystem $f(\R^n)$) is exponentially
unstable.  We admit that at the first glance this may seem ``obvious"
for explicit methods given the form of the map $f$.  After all, $f$ is
just duplicating certain groups of coordinates.  We hope that upon
further reflection the reader will see that this is not completely
obvious even in the case of explicity methods.  Recall that an explict
RK method require composing two or more nonlinear maps, taking a
linear combinations of the composites, composing again and so on.  It
requires an argument why a repeated application of these operations
preserves duplication of coordinates.

\subsection*{Organization of the paper}  In section~\ref{sec:2}  we recall
some of the relevant background material.   In section~\ref{sec:3} we
prove our main theorem for explict RK methods.    In
section~\ref{sec:4} we extend the result to implicit RK
methods. Section~\ref{sec:5} is taken up with examples.  There we show
that explicity fourth order Runge-Kutta (RK4) works well for preserving
polydiagonals and affine polydiagonals.  We then  illustrate a difference between theory and practice by an example
of a linear map of dynamical systems that {\em in practice} does not
preserve the discretizations.    The issue is likely to be  the roundoff
errors.   Finally we give an example of an invariant parabola in
$\R^2$ which is not preserved by RK4.     We agree that this should
not be surprising since non-geometric numerical methods are not known
for their ability to preserve nonlinear invariant submanifolds.

\section{Background}\label{sec:2}
We start with the key definition, which is %
standard in differential
geometry.
\begin{definition} \label{def:related} 
Let $f:N\to M$ be a differentiable map between two manifolds.  A
vector field $X $ on $N$ is {\em $f$-related} to the vector field $Y$
on $M$ if
\begin{equation} \label{eq:1.2}
df_x( X(x)) = Y(f(x))
\end{equation}
for all $x\in N$.  Here and elsewhere $df_x:T_xN\to T_{f(x)}M$ denotes the
differential of the map $f$.   
  \end{definition}
  \begin{remark} \label{rmrk:related}
In the case where $N= \R^n$ and $M= \R^m$ the vector field $X$ on $N$
is usually identified with a map $X:\R^n\to \R^n$, and similarly $Y$ is
identified with a map $Y:\R^m\to \R^m$.  The equation \eqref{eq:1.2}
then 
reduces to
\[
df(x) \, X(x) = Y (f(x))
  \]
where $df(x)$ is the Jacobian matrix of the map $f$.
\end{remark}

\begin{remark}
A simple application of the uniqueness of solutions of ODEs and of the
chain rule shows that if a vector field $X$ on a manifold $N$ is
$f:N\to M$ related to a vector field $Y$ on a manifold $M$ then for
any integral curve $\gamma(t)$ of $X$, $f(\gamma(t)) $ is an integral
curve of $Y$.  See for example \cite{Warner}.
\end{remark}

\begin{remark}
It is common to refer to a pair $(N,X)$ where $N$ is a manifold and
$X$ is a vector field on $N$ as a {\em  continuous time dynamical
  system}.   A {\em map of dynamical systems} from a system $(N,X)$ to
a system $(M,Y)$ is a differntiable map $f:N\to M$ so that $X$ is
$f$-related to $Y$.  Continuous time dynamical systems and  their maps
form a category, see for example \cite{L}.   One may may interpret the
main results of the paper as an attempt to construct a class of functors from 
a category of continuous time systems to the category of discrete
time systems using numerical  integration methods.  The attempt 
succeeds in the case where the objects of the source category are
Euclidean (i.e., coordinate) vector spaces, the morphisms are affine
maps and the functors are constructed using the RK integration methods.
 \end{remark} 
 We now turn to numerical methods.

\begin{definition}\label{def:rk}
  Consider a vector field $X: \R^n\to\R^n$.  Choose $s\in \N$,
  $\{a_{ij}\}_{i,j=1}^s$, and $\{b_i\}_{i=1}^s$.  The {\bf Runge-Kutta method with
    matrix $a_{ij}$, weights $b_i$, and stepsize $h$}, denoted
  $D_X^{(A,b,h)}$, is defined as follows (see ~\cite[(12.51),
  pages 351---352]{Suli.Mayers.book}):  For $i=1,\dots, s$, we
  set
  \begin{equation*}
    k_{X,i}(x) := X\left(x + h \sum_{j=1}^s a_{i,j} k_{X,j}(x)\right),
  \end{equation*}
  and then 
  \begin{equation*}
    D_X^{(A,b,h)}(x) := x + h \sum_{i=1}^s b_i k_{X,i}(x).
  \end{equation*}  
We say that the method is {\bf explicit} if $a_{ij} = 0$ whenever
$i\le j$.  Otherwise the method is {\bf implicit}.  
\end{definition}

\begin{remark}
In an explicit RK method for a vector field $X$
\begin{eqnarray*}
k_{X,1}(x) &:=& X(x),\\
  k_{X,2}(x)&:= &X(x + h  a_{2,1} k_{X,1}(x)),\\
  k_{X,3}(x)&:= &X(x + h ( a_{3,1} k_{X,1}(x)) +  a_{3,2}
                  k_{X,2}(x)))),\\
  \vdots& &\\
   k_{X,i}(x) & := &X(x + h ( a_{i,1} k_{X,1}(x)+\cdots+a_{i,i-1}
  k_{X,i-1}(x) ))   \\
  \vdots &&
 \end{eqnarray*} 
 \end{remark} 

Once a Runge-Kutta method is fixed, we {\em numerically integrate} the
ODE
\begin{equation} \label{eq:ODE}
  \dot{x} = X(x), x(0) = x_0
\end{equation}
  by the following iterative scheme:
\begin{equation*}
  x_{n+1} = D_X^{(A,b,h)}(x_n), \qquad \textrm{ for all } n\geq 0.
\end{equation*}
There is a large theory of the accuracy, efficiency, convergence, and
consistency of such methods, which we do not address here.   See, for example,~\cite{Suli.Mayers.book, Iserles.book}.   
Under certain well-understood conditions, the solution (integral
curve) $x(t)$ of \eqref{eq:ODE} will be well-approximated by
$x_{\lfloor(t/h)\rfloor}$ for all $t\geq 0$.

\begin{example}  The fourth order Runge-Kutta method (RK4) is defined
  by the following data:
\begin{equation*}
  A = \left(\begin{array}{cccc}0&0&0&0\\1/2&0&0&0\\0&1/2&0&0\\0&0&1&0\end{array}\right), \quad b = (1/6,1/3,1/3,1/6)^T.
\end{equation*}
Note that RK4 is explicit.
\end{example}

  
\section{Explicit RK methods}\label{sec:3}
In this section we prove

\begin{theorem}\label{thm:explicit}\label{thm:1}
  Let $X:\R^n\to\R^n$ and $Y:\R^m\to\R^m$ be two vector fields,  $L: \R^n\to\R^m$ a linear map and $p\in \R^m$, such that
\begin{equation}\label{eq:AffineAssumption}
  LX(x) = Y(Lx+p) \qquad \textrm{ for all }x\in \R^n.
\end{equation}
Then for any choice of a vector $b$ and a matrix $(a_{i,j})$ in
Definition~\ref{def:rk} that gives an explicit RK method (i.e.,
$a_{ij} = 0 $ for $i\leq j$), we have
\begin{equation}\label{eq:AffineConclusion}
  LD_X^{(A,b,h)}(x) + p = D_Y^{(A,b,h)}(L x + p).
\end{equation}
\end{theorem}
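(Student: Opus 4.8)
The plan is to prove, by induction on the stage index $i$, that the Runge--Kutta stages themselves are intertwined by the affine map, namely that
\[
L\, k_{X,i}(x) = k_{Y,i}(Lx+p) \qquad \text{for all } x\in\R^n \text{ and all } i=1,\dots,s,
\]
and then to combine these identities with the weights $b_i$ to read off \eqref{eq:AffineConclusion}. For the base case $i=1$, explicitness gives $k_{X,1}(x)=X(x)$ and $k_{Y,1}(y)=Y(y)$, so $L k_{X,1}(x) = LX(x) = Y(Lx+p) = k_{Y,1}(Lx+p)$ directly from the hypothesis \eqref{eq:AffineAssumption}.

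For the inductive step, fix $i$ and assume $L k_{X,j}(x) = k_{Y,j}(Lx+p)$ for all $j<i$ and all $x$. Since the method is explicit, $a_{ij}=0$ for $j\ge i$, so
\[
k_{X,i}(x) = X\!\left(x + h\sum_{j=1}^{i-1} a_{ij}\, k_{X,j}(x)\right).
\]
Write $w := x + h\sum_{j=1}^{i-1} a_{ij}\, k_{X,j}(x)$. Using linearity of $L$ and the induction hypothesis,
\[
Lw + p = (Lx+p) + h\sum_{j=1}^{i-1} a_{ij}\, L k_{X,j}(x) = (Lx+p) + h\sum_{j=1}^{i-1} a_{ij}\, k_{Y,j}(Lx+p),
\]
which is exactly the argument of $Y$ occurring in the definition of $k_{Y,i}(Lx+p)$. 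Applying \eqref{eq:AffineAssumption} at the point $w$ then gives $k_{Y,i}(Lx+p) = Y(Lw+p) = LX(w) = L k_{X,i}(x)$, closing the induction. Finally, summing against the weights,
\[
D_Y^{(A,b,h)}(Lx+p) = (Lx+p) + h\sum_{i=1}^s b_i\, k_{Y,i}(Lx+p) = Lx + p + h\sum_{i=1}^s b_i\, L k_{X,i}(x) = L D_X^{(A,b,h)}(x) + p,
\]
which is \eqref{eq:AffineConclusion}.

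I do not expect a genuine obstacle in the explicit case. The two points that require care are, first, that the hypothesis \eqref{eq:AffineAssumption} must be invoked not at $x$ but at the shifted stage point $w$, and second, that the affine offset $p$ only enters additively and once, precisely because $L$ is linear, so there is no accumulation of copies of $p$ across stages. The deeper structural observation is that the strictly lower triangular condition $a_{ij}=0$ for $j\ge i$ is exactly what makes the induction well founded, each stage $k_{X,i}$ being a function of the earlier stages alone. This is also the feature that breaks for implicit methods, where the stages are defined only implicitly as the solution of a coupled system of equations; that case will need the additional input that the affine map sends the solution of the $x$-stage equations to the solution of the $y$-stage equations, which is carried out in Section~\ref{sec:4}.
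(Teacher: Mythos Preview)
Your proof is correct and follows essentially the same approach as the paper: an induction on the stage index to show $L k_{X,i}(x)=k_{Y,i}(Lx+p)$, followed by summation against the weights $b_i$. The only cosmetic difference is that the paper isolates the linearity computation in your inductive step as a standalone lemma (Lemma~\ref{lem:fkgk}), whereas you carry it out inline via the auxiliary point $w$.
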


\begin{remark}
Let $X$, $Y$, $f(x) = Lx + p$, $D_X^{(A,b,h)}$ and $ D_Y^{(A,b,h)}$ be
as above.  Pick $x_0 \in \R^n$.  Set $y_0 := f(x_0) = Lx_0 +p$.
Define recursively
\[
x_{n+1}: = D X^{(A,b,h)} (x_n)\qquad \textrm{and}\qquad y_{n+1}: = D Y^{(A,b,h)} (y_n).
\]
An induction argument based on  Theorem~\ref{thm:1} implies that
  \[
y_n = f(x_n) 
\]
for all $n$.
 \end{remark} 
 Our proof of Theorem~\ref{thm:1} is based on a lemma.

 \begin{lemma}\label{lem:fkgk}
   Fix $q\geq 0$.   Let $f:\R^n\to \R^m$, $f(x) = Lx + p$ be an affine
   map as in Theorem~\ref{thm:1}.   
  Let $f_0, f_1,\dots, f_q:\R^n\to\R^n$ and $g_0,
  g_1,\dots,g_q:\R^m\to\R^m$ be two collections of maps where 
\begin{equation*}
  g_k ( L x + p) = Lf_k(x) \qquad \textrm{for all } 0\leq k \leq q.
\end{equation*}
Fix $\gamma_1,\dots, \gamma_q\in \R$ and  define the functions $\varphi:\R^n\to\R^n$
and $\psi: \R^m\to\R^m$ by 
\begin{align*}
  \varphi(x) &:= f_0\left(x + \sum_{i=1}^q \gamma_i f_i(x)\right),\\
  \psi (y) &:= g_0\left(y + \sum_{i=1}^q \gamma_i g_i(y)\right).
\end{align*}
 Then
\begin{equation*}
 \psi (L x + p ) = L\varphi (x).
\end{equation*}
\end{lemma}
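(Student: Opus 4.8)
The plan is to reduce the statement to the hypothesis $g_k(Lx+p) = Lf_k(x)$ by tracking how the affine map $f(x) = Lx+p$ interacts with the two specific operations out of which $\varphi$ and $\psi$ are built: forming the linear combination $x + \sum_i \gamma_i f_i(x)$ inside the argument, and then applying $f_0$ (resp.\ $g_0$) on the outside. The key observation is that although $f$ is only affine, the constant $p$ behaves well under the particular combination $x + \sum \gamma_i f_i(x)$ because that expression has ``coefficient $1$'' on the leading $x$, so that $L$ applied to it again produces exactly one copy of $p$ — which is precisely what is needed to feed into the $g_k(Lx+p)$ relation.

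First I would compute $L$ applied to the inner argument of $\varphi$:
\begin{equation*}
 L\!\left(x + \sum_{i=1}^q \gamma_i f_i(x)\right) = Lx + \sum_{i=1}^q \gamma_i \, Lf_i(x) = Lx + \sum_{i=1}^q \gamma_i\, g_i(Lx+p).
\end{equation*}
Then I would add $p$ to both sides and regroup:
\begin{equation*}
 L\!\left(x + \sum_{i=1}^q \gamma_i f_i(x)\right) + p = (Lx + p) + \sum_{i=1}^q \gamma_i\, g_i(Lx+p).
\end{equation*}
Writing $y := Lx+p$, the right-hand side is exactly $y + \sum_{i=1}^q \gamma_i g_i(y)$, i.e.\ the inner argument of $\psi$ evaluated at $y$. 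Applying the hypothesis $g_0(L z + p) = L f_0(z)$ with $z := x + \sum_{i=1}^q \gamma_i f_i(x)$, and noting $Lz + p$ equals the expression just computed, gives
\begin{equation*}
 \psi(Lx+p) = g_0\!\left(y + \sum_{i=1}^q \gamma_i g_i(y)\right) = g_0(Lz + p) = L f_0(z) = L\varphi(x),
\end{equation*}
which is the desired identity.

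I do not expect a serious obstacle here; the only point requiring care is bookkeeping the affine shift $p$ — specifically checking that exactly one copy of $p$ emerges from $L(\,\cdot\,) + p$ applied to $x + \sum \gamma_i f_i(x)$, so that the result is of the form $Lz + p$ with $z$ itself in the domain $\R^n$ and hence eligible for the hypothesis on $f_0, g_0$. The case $q = 0$ (empty sum) is included and needs no separate treatment. Once Lemma~\ref{lem:fkgk} is established, Theorem~\ref{thm:1} will follow by applying it iteratively: in an explicit RK method each stage $k_{X,i}(x)$ is of exactly the form $\varphi$ with $f_0 = X$, $g_0 = Y$ (using \eqref{eq:AffineAssumption}), $f_j = k_{X,j}$, $g_j = k_{Y,j}$ for $j < i$, and $\gamma_j = h a_{i,j}$, so an induction on $i$ propagates the relation $k_{Y,i}(Lx+p) = L k_{X,i}(x)$ through all stages; the final output $D_X^{(A,b,h)}(x) = x + h\sum_i b_i k_{X,i}(x)$ is then handled by one more application of the same linear-combination-plus-shift computation.
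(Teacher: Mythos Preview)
Your proposal is correct and follows essentially the same computation as the paper's proof: both arguments hinge on using linearity of $L$ to rewrite $Lx + p + \sum_i \gamma_i g_i(Lx+p)$ as $L\big(x + \sum_i \gamma_i f_i(x)\big) + p$ via the hypothesis on $g_1,\dots,g_q$, and then invoking the hypothesis on $g_0$ to pull $L$ outside. The only difference is presentational---the paper begins from $\psi(Lx+p)$ and unwinds, whereas you build the identity from the inside of $\varphi$ outward---and your remarks on how the lemma feeds into the induction for Theorem~\ref{thm:1} accurately reflect the paper's subsequent argument.
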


\begin{proof}
 The proof is a computation:
\begin{align*}
  \psi (L x + p)
  	&= g_0\left(L x + p + \sum_{i=1}^q \gamma_i g_i(L x + p)\right)\\
	 & = g_0\left(L x + p+ \sum_{i=1}^q \gamma_i L f_i(x)\right)\\
	&= g_0\left(L \left(x  + \sum_{i=1}^q \gamma_i  f_i(x)\right)+ p\right)\\
	&= L f_0 \left(x  + \sum_{i=1}^q \gamma_i  f_i(x)\right) =
  L\varphi (x).\\
\end{align*}
\end{proof}
\begin{proof}[Proof of Theorem~\ref{thm:1}]
  We first prove by induction that
\begin{equation}\label{eq:k}
  k_{Y,i}(Lx +p) = Lk_{X,i}(x)
\end{equation}
  for all $i=1,\dots, s$.   Since $k_{X,1}(x)  = X(x)$ and $k_{Y,1}(x)
  = Y(x)$, the equation  \eqref{eq:k} holds for $i=1$ by assumption:
  see \eqref{eq:AffineAssumption}.  Now assume that~\eqref{eq:k} holds
  for $j=1,\dots, i-1$.  Then we compute:
  \begin{align*}
    k_{Y,i}(Lx+p)
    &= Y\left((Lx + p) +  h\sum_{j=1}^{i-1} a_{i,j}k_{Y,j}(Lx+p)\right) & \\
    &= Y\left(Lx + p + h\sum_{j=1}^{i-1} a_{i,j}L\big(k_{X,j}(x)\big)
      \right)
    & \textrm{  by inductive assumption} \\
    &= Y\left(L\left(x + h\sum_{j=1}^{i-1}
      a_{i,j}k_{X,j}(x)\right)+ p \right) &  \textrm{ by Lemma~\ref{lem:fkgk}}\\
    &= LX\left(x + h\sum_{j=1}^{i-1} a_{i,j}k_{X,j}(x)\right) & = L k_{X,i}(x). 
\end{align*}
Finally
\begin{align*}
  D_Y^{(A,b,h)}(L x + p)
  	&= (Lx+p) + h\sum_{j=1}^s b_j k_{Y,j}(Lx+p) 
 	= (Lx+p) + h\sum_{j=1}^s b_j L\big(k_{X,j}(x)\big)\\ 
	&= L\left(x + h\sum_{j=1}^s b_j k_{X,j}(x)\right) + p
	= LD_X^{(A,b)}(x) + p,
\end{align*}
and we are done.
\end{proof}    

\section{Implicit RK methods} \label{sec:4}

We now consider implicit Runge--Kutta methods.  Recall the defining
equation for the functions $k_{X,i}$ in Definition~\ref{def:rk}: 
\begin{equation} \label{eq:k2}
    k_{X,i}(x) = X\left(x + h \sum_{j=1}^s a_{i,j} k_{X,j}(x)\right).
  \end{equation}
  Unless the numbers $a_{i,j} =0$ for all $i\leq j$ \eqref{eq:k} is a
  system of nonlinear ``algebraic'' equations.  In general there no
  easy way to solve this system of equations and obtain a formula for
  $k_{X,i}(x)$ in terms of $k_{X,1}$, \ldots, $k_{X,i-1}$.  One
  solution to the problem is  to choose a small enough step $h$
  so that the contraction mapping principle applies to the
  appropriately defined map.  Then one chooses
  a starting point and iterates.  See for example
  \cite[Chapter 6]{Iserles.book}.

\begin{definition}\label{def:irki}
  We define an {\em implicit Runge-Kutta method with $q$-step
    iterative solution} as follows:  Choose $s,A,b,h$ as in
  Definition~\ref{def:rk}. Choose a positive integer $q$ and fix a
  point $x\in \R^n$.  Define a
  map
\[
  \underline{X}:  (\R^n)^s
  \to (\R^n)^s
\]
by 
  \begin{equation}\label{eq:4.3}
     \underline{X}(\xi_1,\ldots, \xi_s)= \left(X\left(x + h \sum_{j=1}^s
         a_{1,j} \xi_j\right), \ldots,  X\left(x+ h \sum_{j=1}^s a_{s,j} \xi_j\right)
\right)
\end{equation}
for all $\xiv=(\xi_1,\ldots, \xi_s) \in (\R^n)^s$.
  Choose $\xiv^{(0)} =
  (X(x),\ldots, X(x))$.   Define $\xiv^{(k+1)} \in (\R^n)^s$ recursively by
  $\xiv^{(k+1)}:  = \underline{X}(\xiv^{(k)})$ for $k=0,\ldots,
  q-1$. Now  define
 \begin{equation*}
    D_X^{(A,b,h,q)}(x) := x + h \sum_{i=1}^s b_i \xi_i^{(q)}.
  \end{equation*}    
\end{definition}
\noindent
We are now in position to state the second main result of the paper.

\begin{theorem}\label{thm:implicit}\label{thm:2}
 Let  $L: \R^n\to\R^m$ a linear map, $p\in \R^m$ a point and  let $X:\R^n\to\R^n$ and $Y:\R^m\to\R^m$ be two vector fields such that
\begin{equation}\tag{\ref{eq:AffineAssumption}}
  LX(x) = Y(Lx+p) \qquad \textrm{ for all }x\in \R^n.
\end{equation}
Define $D_X^{(A,b,h,q)}(x)$, $D_Y^{(A,b,h,q)}(y)$ as above in
Definition~\ref{def:irki}.  Then for any choice of the parameters
$A,b,h$ and $q$ and for any $x\in \R^n$
\begin{equation}\tag{\ref{eq:AffineConclusion}}
  LD_X^{(A,b,h,q)}(x) + p = D_Y^{(A,b,h,q)}(L x + p).
\end{equation}
\end{theorem}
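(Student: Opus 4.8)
The plan is to mimic the structure of the proof of Theorem~\ref{thm:1}, but now tracking the iterates $\xiv^{(k)}$ of the fixed-point iteration rather than the stage values $k_{X,i}$ directly. The key observation is that the map $\underline{X}$ from \eqref{eq:4.3} is built entirely out of affine combinations of evaluations of $X$, so it interacts well with $L$ and $p$ componentwise. Concretely, I would introduce the ``diagonal-type'' affine map $F:(\R^n)^s\to(\R^m)^s$ given by $F(\xi_1,\dots,\xi_s) = (L\xi_1,\dots,L\xi_s)$ (no shift by $p$ on these auxiliary slope variables, since slopes transform linearly), and prove the intertwining relation
\begin{equation*}
  \underline{Y}\big(F(\xiv)\big) = F\big(\underline{X}(\xiv)\big)
\end{equation*}
for every $\xiv \in (\R^n)^s$, where $\underline{Y}$ is the analogue of \eqref{eq:4.3} built from $Y$ at the base point $Lx+p$. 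This is where Lemma~\ref{lem:fkgk} does the work: in the $i$-th slot, $Y\big((Lx+p) + h\sum_j a_{i,j} L\xi_j\big) = Y\big(L(x + h\sum_j a_{i,j}\xi_j) + p\big) = L X\big(x + h\sum_j a_{i,j}\xi_j\big)$ by the affine assumption \eqref{eq:AffineAssumption} (equivalently, the $q=s$ instance of Lemma~\ref{lem:fkgk} with $f_i$ the constant maps $\xi_i$).

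Next I would run an induction on $k$ to show that the iterates match under $F$, i.e.\ that $\etav^{(k)} = F(\xiv^{(k)})$ for $k=0,\dots,q$, where $\xiv^{(k)}$ is the iteration for $X$ started at $\xiv^{(0)}=(X(x),\dots,X(x))$ and $\etav^{(k)}$ is the iteration for $Y$ started at $\etav^{(0)}=(Y(Lx+p),\dots,Y(Lx+p))$. The base case $k=0$ is exactly the assumption \eqref{eq:AffineAssumption} applied in each slot: $Y(Lx+p) = LX(x)$, so $\etav^{(0)} = F(\xiv^{(0)})$. The inductive step is immediate from the intertwining relation: $\etav^{(k+1)} = \underline{Y}(\etav^{(k)}) = \underline{Y}(F(\xiv^{(k)})) = F(\underline{X}(\xiv^{(k)})) = F(\xiv^{(k+1)})$. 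Finally, one reads off the conclusion from the definition of $D_X^{(A,b,h,q)}$:
\begin{equation*}
  D_Y^{(A,b,h,q)}(Lx+p) = (Lx+p) + h\sum_{i=1}^s b_i \eta_i^{(q)} = (Lx+p) + h\sum_{i=1}^s b_i L\xi_i^{(q)} = L\Big(x + h\sum_{i=1}^s b_i \xi_i^{(q)}\Big) + p = L D_X^{(A,b,h,q)}(x) + p,
\end{equation*}
which is \eqref{eq:AffineConclusion}.

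The one genuine subtlety — and the step I expect to need the most care — is bookkeeping about base points: the map $\underline{X}$ in Definition~\ref{def:irki} depends on the fixed point $x$, and its $Y$-counterpart must be the one associated with the base point $Lx+p$, not some other point. So I would state the intertwining lemma with the base points explicitly paired as $(x, Lx+p)$ throughout, and likewise note that the correct $Y$-side initial guess is $\etav^{(0)} = (Y(Lx+p),\dots,Y(Lx+p))$, which by \eqref{eq:AffineAssumption} equals $(LX(x),\dots,LX(x)) = F(\xiv^{(0)})$ — this is precisely why the $x$-dependent choice of initial guess in Definition~\ref{def:irki} is compatible with the map $f$. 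No contraction/convergence hypotheses are needed, since the statement is about the $q$-fold iterate for a fixed finite $q$; everything is a finite algebraic identity. Aside from this, the argument is entirely parallel to the explicit case and requires no new ideas beyond a second, slightly repackaged, application of Lemma~\ref{lem:fkgk}.
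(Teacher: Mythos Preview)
Your proposal is correct and follows essentially the same route as the paper: define $F=(L\times\cdots\times L)$ on $(\R^n)^s$, show $\etav^{(k)}=F(\xiv^{(k)})$ by induction on $k$ (base case from \eqref{eq:AffineAssumption}, inductive step from the componentwise computation you wrote out), and then conclude by linearity in the final update formula. The only cosmetic difference is that you isolate the intertwining relation $\underline{Y}\circ F = F\circ \underline{X}$ as a separate statement before the induction, whereas the paper folds that computation directly into the inductive step; your remarks about base-point bookkeeping and the finiteness of $q$ are apt and match the paper's treatment.
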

\mbox{}
\begin{proof}
By definition $  D_X^{(A,b,h,q)}(x) := x + h \sum_{i=1}^s b_i
\xi_i^{(q)}$ where $\xiv^{(q)}$ is defined recursively by
\[
\xiv^{(0)} =
  (X(x),\ldots, X(x))\qquad \textrm{and}\qquad \xiv^{(k+1)}:  = \underline{X}(\xiv^{(k)})
\textrm{ for } k=0,\ldots,
q-1;
\]
and $\underline{X}: (\R^n)^s \to (\R^n)^s$ is given by
\eqref{eq:4.3}.  Define
$\underline{Y}:(\R^m)^s \to (\R^n)^s$ by changing what
needs to be changed in \eqref{eq:4.3}.   In particular $x$ is replaced
by $Lx +p$. Let
\[
\etav^{(0)} = (Y(Lx+p),\ldots, Y(Lx+p)) = (\overbrace{L\times \cdots
  \times L}^{s} )\,(\xiv^{(0)}) \in (\R^m)^s
\]
and define $\etav^{(q)}$ recursively by
\[
\etav^{(k+1)}:  = \underline{Y}(\etav^{(k)})\quad \textrm{ for } k=0,\ldots,
q-1.
\]
It is easy to show by induction that
\[
\etav^{(k)}  =  (\overbrace{L\times \cdots \times L}^{s} )\,(\xiv^{(k)}) 
  \]
  for $k=1,\ldots, q$.   Here is a proof of  the inductive step:
 
\begin{align*}
  \etav^{(k+1)}
  	&= \left(Y\left(y + h \sum_{j=1}^s a_{1,j} \eta^{(k)}_j\right),
     \ldots, Y\left(y + h \sum_{j=1}^s a_{s,j} \eta^{(k)}_j\right)\right)\\
	&= \left(Y\left(Lx +p + h \sum_{j=1}^s a_{1,j} L\xi^{(k)}_j\right),
     \ldots, Y\left(Lx+p + h \sum_{j=1}^s a_{s,j} L\xi^{(k)}_j\right)\right)\\
&= \left(Y\left(L\left(x + h \sum_{j=1}^s a_{1,j} \xi^{(k)}_j \right) +p \right),
     \ldots, Y\left(L\left(x+ h \sum_{j=1}^s a_{s,j} \xi^{(k)}_j\right) +p
                                                                          \right)\right)\\
 &= \left(LX\left( x + h \sum_{j=1}^s a_{1,j} \xi^{(k)}_j \right),
     \ldots, Lx\left(L(x+ h \sum_{j=1}^s a_{s,j} \xi^{(k)}_j
   \right)\right)\\
        &	= \left( L\xi^{(k+1)}_1, \ldots, L\xi^{(k+1)}_s \right) = (\overbrace{L\times \cdots \times L}^{s} )\,(\xiv^{(k+1)}) .
\end{align*}
In particular
\[
\left( \eta^{(q)}_1, \ldots, \eta^{(q)}_s \right) = \left( L\xi^{(q)}_1, \ldots, L\xi^{(q)}_s \right) .
\]
Finally
\begin{align*}
  D_Y^{(A,b,h,q)}(L x + p)
  	&= (Lx + p) + h\sum_{i=1}^s b_{i}\eta_i^{(q)}
	= (Lx + p) + h\sum_{i=1}^s b_{i}L\xi_i^{(q)}\\
	&= L \left(x + h\sum_{i=1}^s b_{i}\xi_i^{(q)}\right) + p = LD_X^{(A,b,h,q)}(x) + p.
\end{align*}
 \end{proof}

\section{Examples} \label{sec:5}

We present several examples illustrating the results of the theorems
above.  In many of the examples listed below, we want to check whether
or not the vector fields are related in the sense defined in the
introduction.  As such, one of the quantities that we plot is the
scalar quantity $\left\|{D_Y \circ f - f\circ D_X}\right\|_1$, where
the subscript denotes that we are taking the $\ell^1$ norm of the
vector.  This quantity is identically zero if the vector fields $X$
and $Y$ are $f$-related.

\begin{example}\label{example:1}
Consider the vector field
\begin{equation}\label{system:1}
Y:\R^2 \to \R^2, \qquad Y
\begin{pmatrix}x_1\\x_2
\end{pmatrix}
=\begin{pmatrix} -x_1 -2x_2 +	(x_1-x_2)x_1^3\\ 
-2x_1 -x_2 
\end{pmatrix}. 
\end{equation}
It is easy to see that $\Delta= \{x_1 = x_2\}$ is an invariant
submanifold of the vector field $Y$, since $Y\begin{pmatrix} x \\
  x\end{pmatrix} = -3\begin{pmatrix} x \\ x\end{pmatrix}$.  The
linearization of $Y$ at the origin (in fact, along any point on the diagonal $\{x_1= x_2\}$) is the matrix 
$\begin{pmatrix} -1 & -2 \\
  -2 & -1\end{pmatrix}$.
It follows that the diagonal $\Delta$ is an unstable submanifold of $Y$.
Nonetheless, the diagonal   is preserved by numerical integration.

Now for any vector field $Y$ on a manifold $M$ and an invariant
submanifold $\Sigma$ of $Y$ the inclusion $f:\Sigma \hookrightarrow M$
relates the restriction $X = Y|_\Sigma$ and $Y$.  In the case of the
example before us, the inclusion $L:\Delta \to \R^2$ of the invariant
submanifold is $L(u) = (u,u)$ and $X(u) = -3u$.  Since the map $L$ is
linear Theorem~\ref{thm:1} applies.
Figure \ref{ex:fig1} graphically illustrates Theorem~\ref{thm:1} for this pair of systems: it shows that $L D_X^{(A,b,h)}(x) = D_Y^{(A,b,h)}(Lx)$, which by extension demonstrates the invariance of the diagonal.  The left figure shows a phase portrait of system \ref{system:1} while the right figure shows agreement of numerical integration. 

 \begin{figure}[ht]\centerline{
\includegraphics[scale=.55]{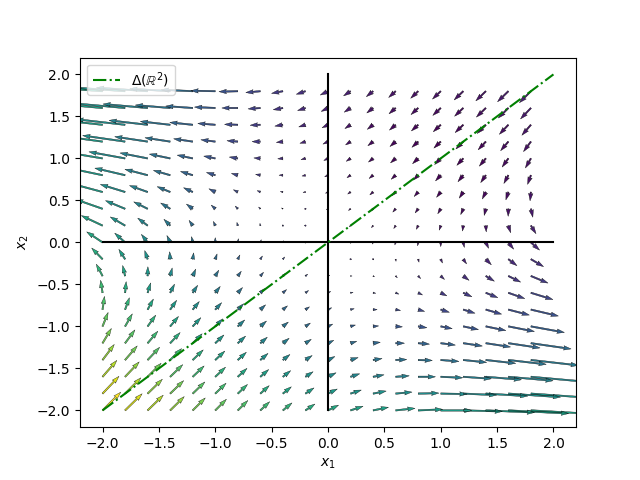}
\includegraphics[scale=.395]{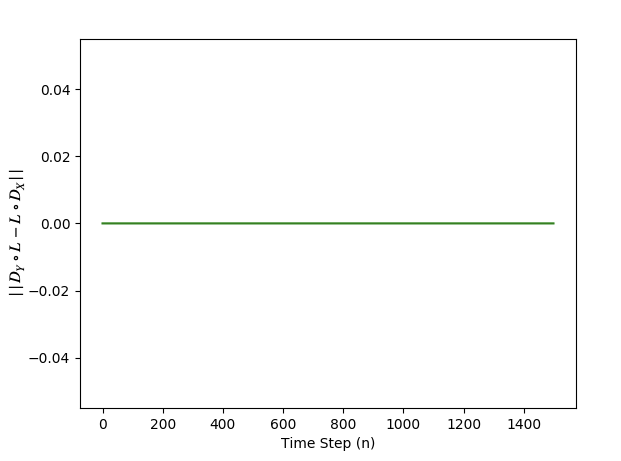}}\caption{Numerics for system \eqref{system:1}. The left figure shows a phase portrait of system \ref{system:1} while the right figure shows agreement of numerical integration.  }\label{ex:fig1}
\end{figure}
\end{example}

\newpage 
\begin{example} \label{example:2} This example shows that affine
  invariant submanifolds also are preserved in practice.  We consider
  the vector field
\begin{equation}\label{system:22}
Y:\R^2 \to \R^2, \qquad Y
\begin{pmatrix}x_1\\x_2
\end{pmatrix}
=\begin{pmatrix} -x_1 -2x_2 +	1\\ 
-2x_1 -x_2 
\end{pmatrix}. 
\end{equation}
The map 
\[
f:\R\to \R^2, \qquad f(u) = (u,u+1)
\]
is of the form $f(u) = Lu +(0,1)$ where $L:\R \to \R^2$ is the linear
map $L(u) = (u,u)$. The affine map $f$ 
relates the vector field $X(u) = -3u-1$ and $Y$:
\[
Y(f(u)) = Y(u,u+1) = (-3u-1, -3u -1) = Df (u) \,\,X(u).
\]
Thus the affine submanifold $\{x_2 = x_1 +1\}$ is an
invariant submanifold of $X$.  A simulation shows that Runge-Kutta preserves this ``offset'' diagonal. 

  \begin{figure}[ht]\centerline{
\includegraphics[scale=.5]{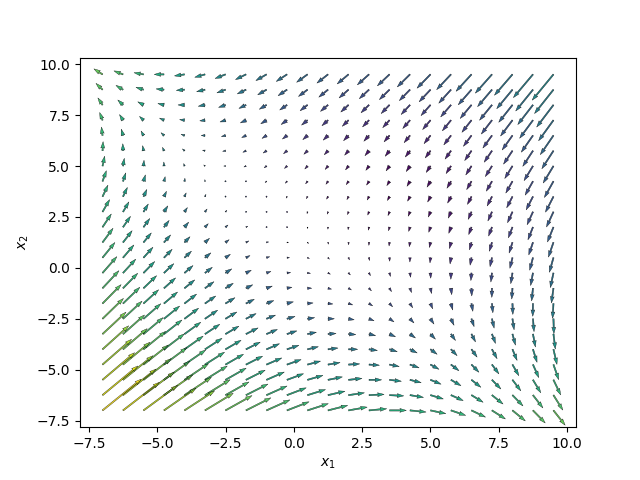}
\includegraphics[scale=.37]{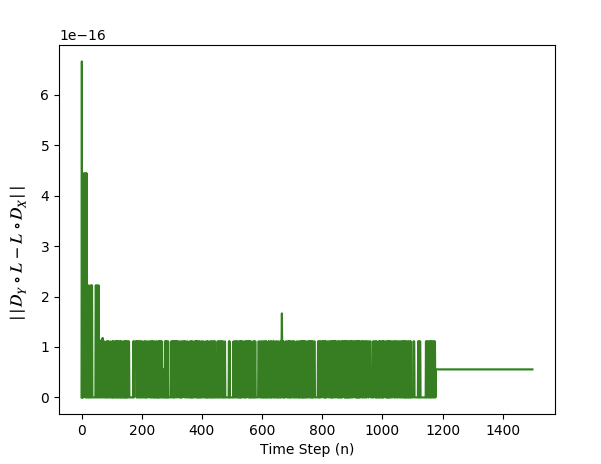}
}\caption{Numerics for system \ref{system:22}. The left figure shows a phase portrait of system \ref{system:1} while the right figure shows agreement of numerical integration.}
\end{figure}
\end{example}

\newcommand{\Pb}{\mathbb{P}}
\newpage
\begin{example}\label{example:gFib}
In this example we consider a pair of related vector fields produced
by the networks of manifolds formalism of \cite{DL2}.  Suppose we
choose any three functions $w_1:\R\to \R$, $w_2:\R^3 \to \R$ and $w_3:
\R^2 \to \R$.  Define a vector field $X:\R^3\to \R^3$ by
\[
X(x_1,x_2, x_3) = (w_1(x_1), w_2(x_2,x_1,x_1), w_3(x_3, x_2)).
  \]
  Define a vector field $Y:\R^3 \to \R^3$ by
\[
Y(y_1,y_2, y_3) = (w_1(y_1), w_1(y_2), w_2(y_3, y_1,  y_2)).
  \]
  The first vector field comes from the network
  \begin{equation}
  \xy
(-10,10)*{1\, \bullet}="1";
(-10,-10)*{2\, \bullet}="2"; 
(10,0)*{\bullet \,3}="3"; 
{\ar@/^.5pc/ "1";"3"};
{\ar@/_.5pc/"2";"3"};
\endxy
\end{equation}
and the second from the network
\begin{equation}
\xy
(-10,0)*{1\, \bullet}="1";
(10,0)*{ \bullet}="2"; (10,3)*+{2}="c";
(30,0)*{\bullet \,3}="3"; 
{\ar@/^1pc/ "1";"2"};
{\ar@/_1pc/ "1";"2"};
{\ar@{->} "2";"3"};
\endxy \quad .
\end{equation} There is a map of networs from the first to the second.  Out of
this map of networks the
machinary of \cite{DL2} produces the function
\[
f:\R^3\to \R^3,\qquad f(x_1,x_2, x_3) = (x_1,x_1, x_2)
  \]
  with the property that
  \begin{equation}\label{eq:5.3}
Y\circ f = df \circ X.
\end{equation}
It is also easy to  check directly that \eqref{eq:5.3} holds.
Figure~\ref{fig:3} shows that the $1$-norm of the difference $D_Y\circ
f - f\circ D_X$ stays identically zero throughout the simulation.  In this simulation, we take $w_1(x) = x$, $w_2(y,x_1,x_2) = \frac{\sin(x_1)\cdot x_2}{y}$, and $w_3(z,y) = y\cdot z$.

\begin{figure}[ht]
  \centerline{
\includegraphics[scale=.65]{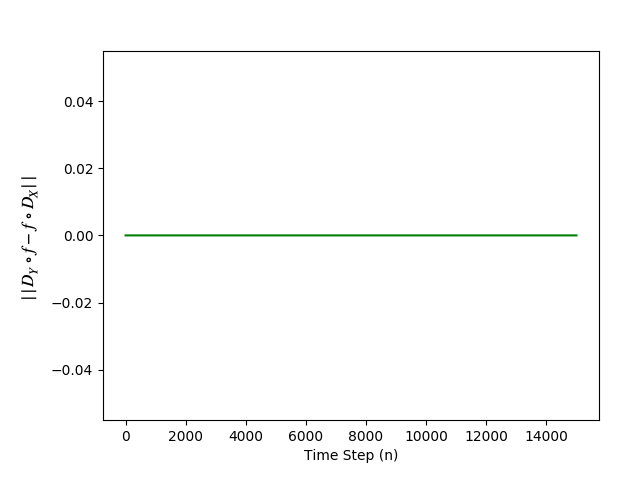}}\caption{Relatedness for example \ref{example:gFib}\label{fig:3}}
\end{figure}
\end{example}

\newpage
\begin{example}\label{example:4}
  Recall that in the coupled cell network formalism of Golubitsky,
  Stewart and their collaborators all the invariant submanifolds are
  vector subspaces and their inclusions are, of course, linear.  The
  formalism developed in \cite{DL1, DL2} is more general.  There the
  maps between dynamical systems are projections followed by diagonal
  embeddings.  In the case where the phase spaces are coordinate
  vector spaces all the maps are again linear.  Consequently as we
  proved in Theorem~\ref{thm:1} explicit RK methods work well for
  these types of networks.

  The approach of \cite{DL2} is generalized in \cite{L} in several
  directions.  In particular maps between dynamical systems
  constructed in \cite{L} need not be linear.  Consider the map
\[
f:\R\to \R^2, \qquad f(x) = (x^2, x).
\]
We claim that for any function $g:\R^2\to \R$ there are vector fields
$X:\R\to \R$ and $Y:\R^2\to \R^2$ which are $f$-related.   Indeed let
\[
X(x) = xg(x^2, x^2)
\]
and let
\[
Y(y_1,y_2) = (2y_1g(y_1,y_2^2), y_2g(y_2^2, y_1)).
\]
Then  
\[
Y (f(x)) = Y(x^2,x)=  (2x^2 g(x^2,x^2),  x g(x^2,x^2)), 
\]
\[
Tf_x = (2x,1)
\]
and
\[
Tf_x X(x) = (2x^2 g(x^2, x^2), xg(x^2, x^2)) = Y(f(x))
\]
for all $x\in \R$. It follows that  the parabola 
\[
P:= \{(x_1,x_2)\in \R^2 \mid x_2^2= x_1\}, 
\]
the image of $f$, is an invariant submanifold of the vector field $Y$.  We present two simulations demonstrating that the parabola is not preserved under numerics, the first where $g(x,y) = 0.999 + \sin(x+y) + \frac{1}{x+y}$, and the second where $g(x,y) \equiv 1$:

  \begin{figure}[ht]
     \centering
     \begin{subfigure}[b]{0.45\textwidth}
         \centering
        \includegraphics[width=\textwidth]{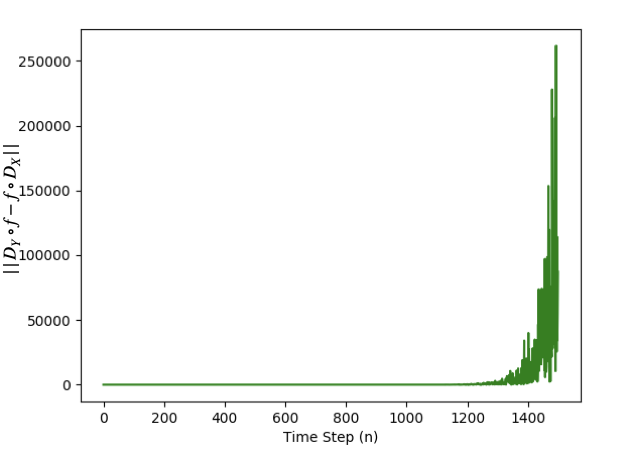}
         \caption{$g(x,y)= 0.999 + \sin(x+y) + \frac{1}{x+y}$}
         \label{fig:y equals x}
     \end{subfigure}
     \hfill
     \begin{subfigure}[b]{0.45\textwidth}
         \centering
         \includegraphics[width=\textwidth]{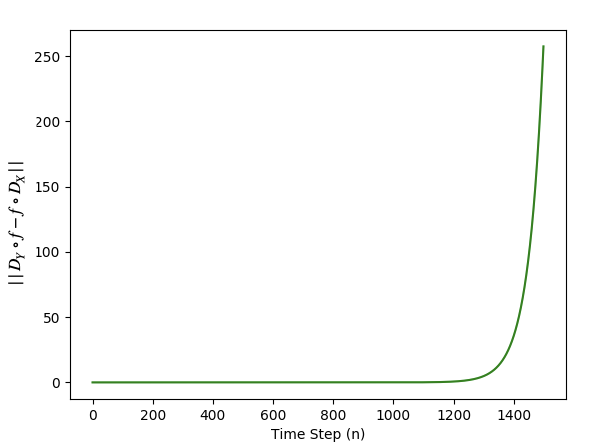}
         \caption{$g(x,y) \equiv 1$}
         \label{fig:three sin x}
     \end{subfigure}
           \caption{Non-invariance of parabola}
        \label{fig:three graphs}
\end{figure}
  
  \end{example}

\newpage
\begin{example}\label{ex:LinearMap}
Now we present an instance of theorem \ref{thm:1} with  a nontrivial linear map $A:\R^2\rightarrow\R^2$,  given by $A = \begin{pmatrix} 1 & -1 \\ 1 & 1\end{pmatrix}$.   Let $X = \begin{pmatrix} 0 & -1 \\ 1 & 0 \end{pmatrix}$ and $Y = \begin{pmatrix} 1 & 0 \\ 0 & -1\end{pmatrix}$ be linear vector fields on $\R^2$. A quick calculation shows that $Y \circ A = A\circ X$, and hence that $(X,Y)$ are $A$-related. Theorem \ref{thm:1} tells us that numerics should agree, and they would with infinite precision numerics.  However, while we see numerical agreement for many time steps,  it appears that  errors begin to arise after many more. 
\begin{figure}[ht]\label{fig:5}
  \centerline{
\includegraphics[scale=.4]{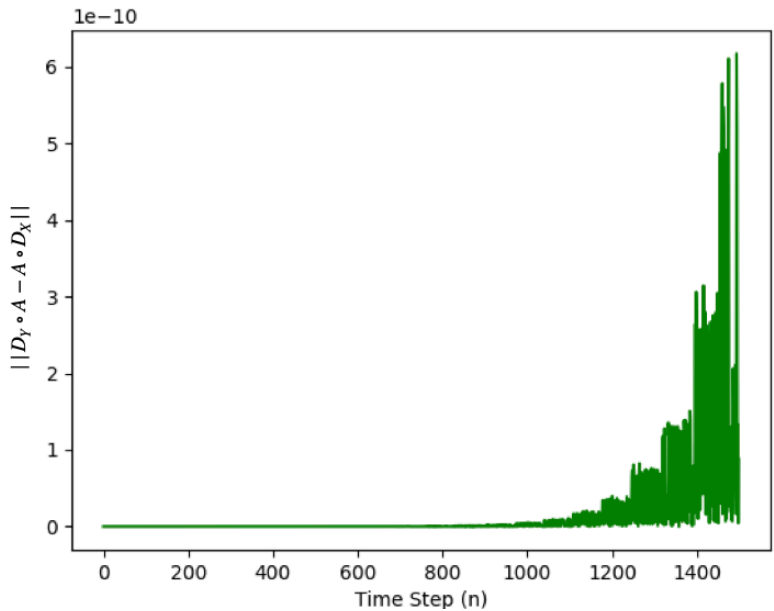}}\caption{Relatedness for example \ref{ex:LinearMap}}
\end{figure}	

\end{example}




\end{document}